\theoremstyle{plain}
\newtheorem{theorem}{Theorem}
\newtheorem{lemma}[theorem]{Lemma}
\newtheorem{corollary}[theorem]{Corollary}
\newtheorem{proposition}[theorem]{Proposition}
\newtheorem{question}{Question}
\newcommand{\vek}[1]{{\mathbf{#1}}}
\newcommand{\Q}{\mathbf{Q}}
\newcommand{\ZZ}{\mathbb{Z}}
\newcommand{\NN}{\mathbb{N}}
\newcommand{\Reals}{\mathbb{R}}
\newcommand{\TQ}{\tilde{Q}}
\newcommand{\ones}{\mathbf{1}}
\newcommand{\zeroes}{\mathbf{0}}
\author{Jan Snellman\thanks{Department of Mathematics, Linköping University, Sweden; jan.snellman@liu.se}}
\date{}
\title{Some comments on a paper by Azam and Richmond}
\tiny\ensuremath{\hookrightarrow}}
\footnotesize\setlength{\fboxsep}{0pt},
\newcommand{\listingsname}{Listing}
\definecolor{EFD}{HTML}{000000}
\definecolor{EfD}{HTML}{ffffff}
\definecolor{EFvp}{HTML}{000000}
\definecolor{EFh}{HTML}{7f7f7f}
\definecolor{EFsc}{HTML}{228b22}
\definecolor{EFw}{HTML}{ff8e00}
\definecolor{EFe}{HTML}{ff0000}
\definecolor{EFl}{HTML}{ff0000}
\definecolor{EFlv}{HTML}{ff0000}
\definecolor{EFhi}{HTML}{ff0000}
\definecolor{EFc}{HTML}{b22222}
\definecolor{EFcd}{HTML}{b22222}
\definecolor{EFs}{HTML}{8b2252}
\definecolor{EFd}{HTML}{8b2252}
\definecolor{EFm}{HTML}{008b8b}
\definecolor{EFk}{HTML}{9370db}
\definecolor{EFb}{HTML}{483d8b}
\definecolor{EFf}{HTML}{0000ff}
\definecolor{EFv}{HTML}{a0522d}
\definecolor{EFt}{HTML}{228b22}
\definecolor{EFo}{HTML}{008b8b}
\definecolor{EFwr}{HTML}{ff0000}
\definecolor{EFpp}{HTML}{483d8b}
\definecolor{EFOa}{HTML}{0000ff}
\definecolor{EFOb}{HTML}{a0522d}
\definecolor{EFOc}{HTML}{a020f0}
\definecolor{EFOd}{HTML}{b22222}
\definecolor{EFOe}{HTML}{228b22}
\definecolor{EFOf}{HTML}{008b8b}
\definecolor{EFOg}{HTML}{483d8b}
\definecolor{EFOh}{HTML}{8b2252}
\definecolor{EFhn}{HTML}{008b8b}
\definecolor{EFhq}{HTML}{9370db}
\definecolor{EFhs}{HTML}{008b8b}
\definecolor{EFrda}{HTML}{707183}
\definecolor{EFrdb}{HTML}{7388d6}
\definecolor{EFrdc}{HTML}{909183}
\definecolor{EFrdd}{HTML}{709870}
\definecolor{EFrde}{HTML}{907373}
\definecolor{EFrdf}{HTML}{6276ba}
\definecolor{EFrdg}{HTML}{858580}
\definecolor{EFrdh}{HTML}{80a880}
\definecolor{EFrdi}{HTML}{887070}
\definecolor{EFdiffa}{HTML}{4F894C}
\definecolor{EFdiffc}{HTML}{842879}
\definecolor{EFdiffco}{HTML}{525866}
\definecolor{EFdiffr}{HTML}{99324B}
\definecolor{EFdiffh}{HTML}{398EAC}
\definecolor{EFdifffh}{HTML}{3B6EA8}
\definecolor{EFdiffhh}{HTML}{842879}
\begin{document}

\maketitle

\begin{abstract}
The rational recursion obtained by Azam and Richmond in
\autocite{azam_generating_2023} for generating functions of
\(P_\lambda(y)\), itself a generating function enumerating by length partitions
in the lower ideal \([0,\lambda]\) in the Young lattice, can be easily extended
to a multi-graded version. We demonstrate this and point out the relation to
enumerating plane partitions with two rows. By means of this simple observation,
we can relate Azam and Richmond's result to those obtained by Andrews and Paule
in \autocite{andrews_macmahons_2007} using MacMahons
\(\Omega\)-operator.
\end{abstract}
\section{Introduction}
\label{sec:orgea4483b}

In \autocite{azam_generating_2023} Azam and Richmond studied the
rank-generating function

\[P_\lambda(y) = \sum_{\mu \in [0,\lambda]} y^{|\mu|}\]

of the lower order ideal \([0,\lambda]\) in the Young lattice. They obtained a
rational recursion for

\[Q_k(\vek{x},y) = \sum_{\lambda \in \Lambda(k)} P_\lambda(y) \vek{x}^{\lambda}\]

where \(\Lambda(k)\) denotes the set of partitions with length \(k\). They
concluded that \(Q_k\) is a rational function, with denominator

\[
D_k(x_1,\dots,x_k,y) =
\prod_{m=1}^k \prod_{j=0}^m (1-y^j \prod_{\ell=1}^m x_\ell).
\]

These results were used to establish asymptotics for the average cardinality of
lower order ideals \([0,\lambda]\) of partitions \(\lambda\) of rank \(n\).
\section{Multigradings, pairs of partitions, and plane partitions with two rows}
\label{sec:orgdc34a86}
\subsection{The generating functions \(Q_k\) and \(\TQ_k\)}
\label{sec:org603d98f}
Let us define

\[Q_k(\vek{x},\vek{y}) =
\sum_{\emptyset \le \mu \le \lambda \in
\Lambda(k)} \vek{y}^\mu \vek{x}^{\lambda}.
\]

Then specializing \(y_1=y_2= \cdots= y_k = y\) we get back the previous
\(Q_k(\vek{x},y)\). However, the multigraded version can be interpreted as the
generating function of plane partitions with at most two rows, where the top row,
representing \(\lambda\), has \(\lambda_k >0\).
Introducing

\[\TQ_k(\vek{x},\vek{y}) =
\sum_{\emptyset \le \mu \le \lambda \in \Lambda(\le k)}
\vek{y}^\mu \vek{x}^{\lambda}\]

where \(\Lambda(\le k)\) denotes partitions with length at most \(k\),
we have that

\[\TQ_k =\sum_{j=0}^k Q_k\]

and that

\[Q_k = \TQ_k - \TQ_{k-1}.\]
\subsection{Cones, hyperplanes, and polytopes}
\label{sec:orgef04a87}
The generating function \(\TQ_k\) enumerates plane partitions
contained in a 2-by-\(k\) box. Explicitly, the inequalities that
the integer vectors \((\lambda,\mu) \in \ZZ^k \times \ZZ^k\) has to
satisfy are as follows:

\begin{align}
\lambda_i - \lambda_j & \ge 0 \qquad \forall i < j \\
\mu_i - \mu_j & \ge 0 \qquad \forall i < j \\
\lambda_i - \mu_i & \ge 0 \qquad \forall i  \\
\lambda_i & \ge 0 \qquad \forall i \\
\mu_i & \ge 0 \qquad \forall i
\end{align}

We let \(C=C_k \subset \Reals^k \times \Reals^k\)
denote the rational pointed polyhedral cone cut out in affine space by the
above inequalities, and let \(A_k\)
be its ``integer transform'', that is to say, the affine monoid
\(C_k \cap (\ZZ^k \times \ZZ^k)\).
\subsubsection{The case \(k=2\)}
\label{sec:org2c3cd4b}
For instance, when \(k=2\), the plane partitions in a \(2 \times 2\)-box
are
\begin{displaymath}
  \begin{pmatrix}
    \lambda_1 & \lambda_2 \\
    \mu_1 & \mu_2
  \end{pmatrix}
\qquad
\lambda_1 \ge \lambda_2 \ge \mu_2 \ge 0, \, \lambda_1 \ge \mu_1 \ge \mu_2 \ge 0.
\end{displaymath}

The corresponding integer transform is \(\TQ_2(x_1,x_2,y_1,y_2)\);
to get the plane partitions enumerated by \(Q_2((x_1,x_2,y_1,y_2)\)
we add the extra iequality \(\lambda_2 >0\).
The resulting polyhedron has \(C_2\) as its recession cone.

The cone
\(C_2 \subset \Reals^2 \times \Reals^2\) has 5 extremal rays.
We can calculate these using
\autocites{sagemath}[][]{normaliz}.
\begin{table}[htbp]
\caption{\label{tab:cone2rays}Generating rays of plane partitions inside a 2 by 2 box}
\centering
\begin{tabular}{rl}
0 & (1, 0, 0, 0)\\
1 & (1, 0, 1, 0)\\
2 & (1, 1, 0, 0)\\
3 & (1, 1, 1, 0)\\
4 & (1, 1, 1, 1)\\
\end{tabular}
\end{table}
\subsubsection{General \(k\)}
\label{sec:org7bd7994}
For a general \(k\),
we note that all extremal rays of \(C=C_k\) intersect the affine hyperplane
\[H=\{(\lambda_1,\dots,\lambda_k,\mu_1,\dots,\mu_k)\, : \, \lambda_1=1\}\]
in lattice points. Call the set of these points \(S_k\).
Let \(P=P_k\) be the intersection  \(C \cap H\).
Let \[T_k = P \cap (\ZZ^k \times \ZZ^k).\]
Recall that we introduced the affine monoid
\[A=A_k = C_k \cap (\ZZ^k \times \ZZ^k)\]
whose generating function is \(\TQ_k\).

\begin{lemma}
Let \(C_k\), \(A_k\), \(P_k\), \(S_k\), \(T_k\) be as above. Then
\begin{enumerate}
\item The cone \(C\) is the disjoint union
\[C = \cup_{t \ge 0} tP\]
of dilations of \(P\).
\item \(S_k = T_k\).
\item Denote the vector of length \(r\) consisting
of all ones by \(\ones^r\), and the vector of length \(r\) consisting of all
zeroes by \(\zeroes^r\). Put
\begin{equation}
\label{eqn:Sk}
U_k=\{(\ones^a,\zeroes^b,\ones^c,\zeroes^d) \, : \,
a+b=c+d=k, a \ge c, a \ge 1\}.
\end{equation}
Then \(S_k=T_k = U_k\).
\item The polytope \(P\) is the convex hull of \(S_k\).
\item \(\TQ_k\) is the multigraded Ehrhart series of \(P\).
\item Let \(D_k = \prod_{\vek{r} \in S_k} \left(1-(\vek{x}\vek{y})^{\vek{r}}\right)\).
Then \(\TQ_k \times D_k\) is a polynomial.
\item \(S_k\) form a Hilbert basis for the affine monoid \(A_k\).
\end{enumerate}
\label{hilbert_basis}
\end{lemma}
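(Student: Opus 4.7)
My plan is to derive the structure of $P_k$ and $A_k$ from a single geometric observation: $P_k$ is contained in the unit cube $[0,1]^{2k}$. On the hyperplane $H$ we have $\lambda_1 = 1$, and the inequality chain $1=\lambda_1 \ge \lambda_j \ge \mu_j \ge 0$ together with $\lambda_1 \ge \mu_1$ forces every coordinate of a point of $P_k$ into $[0,1]$. I would prove the items roughly in the order (3), (2), (4), (1), (5), (7), (6), so that each claim is available when needed.

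From the cube containment I get (3), (2), (4) as follows. Every lattice point of $P_k$ has all coordinates in $\{0,1\}$; writing a weakly decreasing $0/1$-sequence as $(\ones^a,\zeroes^b)$ and encoding $\lambda \ge \mu$ componentwise as $a \ge c$ yields $T_k = U_k$. Each such point is a vertex of the enclosing cube, and a vertex of $[0,1]^{2k}$ that happens to lie in $P_k \subseteq [0,1]^{2k}$ is automatically a vertex of $P_k$ (it cannot be a nontrivial convex combination of other points of $[0,1]^{2k}$, a fortiori not of $P_k$). Hence every lattice point of $P_k$ arises as the intersection of an extreme ray of $C_k$ with $H$, and combined with the reverse inclusion stated just before the lemma this gives $S_k = T_k$. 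Since $P_k$ is bounded, it equals the convex hull of its vertices $S_k$, giving (4).

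Claim (1) reduces to the fact that every nonzero $(\lambda,\mu) \in C_k$ has $\lambda_1 > 0$: indeed $\lambda_1 \ge \lambda_j \ge 0$ and $\lambda_1 \ge \mu_i \ge 0$, so $\lambda_1 = 0$ forces the origin. Hence $(\lambda,\mu) \mapsto \lambda_1$ stratifies $C_k$ into the disjoint slices $tP_k$, and (5) is the restriction of (1) to $\ZZ^{2k}$. For (7), I would decompose any $(\lambda,\mu) \in A_k$ via conjugate partitions: set $a_i = |\{j : \lambda_j \ge i\}|$ and $c_i = |\{j : \mu_j \ge i\}|$ for $i = 1,\dots,\lambda_1$ (padding $\mu$ with zero layers when $\mu_1 < \lambda_1$), and observe
$(\lambda,\mu) = \sum_{i=1}^{\lambda_1} \bigl((\ones^{a_i},\zeroes^{k-a_i}),(\ones^{c_i},\zeroes^{k-c_i})\bigr)$.
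Componentwise dominance $\lambda_j \ge \mu_j$ yields $a_i \ge c_i$, and $\lambda_1 \ge i$ yields $a_i \ge 1$, so each summand lies in $U_k = S_k$.

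Finally (6) follows from (7) by a standard fact: since $S_k$ generates $A_k$, the multigraded semigroup ring $\ZZ[A_k]$ is a quotient of $\ZZ[t_{\vek{r}}:\vek{r}\in S_k]$ (with $\deg t_{\vek{r}}=\vek{r}$) by a finitely generated toric ideal, and a finite multigraded free resolution then exhibits its Hilbert series $\TQ_k$ as a polynomial divided by $\prod_{\vek{r} \in S_k}(1-(\vek{x}\vek{y})^{\vek{r}}) = D_k$. The only step I expect to require any real verification is (7), since one must simultaneously check $a_i \ge c_i$ and $a_i \ge 1$ against the explicit description of $U_k$; the rest is a direct packaging of the cube-containment observation with standard facts on lattice polytopes, Ehrhart series, and affine semigroup rings.
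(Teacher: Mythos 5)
Your proposal is correct in substance, and its core overlaps with the paper's argument while packaging the rest differently. Your layer decomposition in (7), writing \((\lambda,\mu)=\sum_{i=1}^{\lambda_1}\bigl((\ones^{a_i},\zeroes^{k-a_i}),(\ones^{c_i},\zeroes^{k-c_i})\bigr)\) with \(a_i,c_i\) the conjugate counts, is exactly the closed form of what the paper does iteratively: it subtracts the support \((s(\lambda),s(\mu))\in U_k\) and repeats. Both arguments prove that \(U_k\) generates \(A_k\). Where you genuinely diverge is elsewhere: the paper additionally shows that the elements of \(T_k\) are irreducible and then invokes Gordan's lemma to identify the Hilbert basis with \(T_k=U_k=S_k\), whereas your cube argument (\(P_k\subseteq[0,1]^{2k}\), so lattice points of \(P_k\) are cube vertices, hence vertices of \(P_k\), hence lie on extremal rays) is not in the paper and gives a clean, direct proof of \(T_k\subseteq S_k\), of \(T_k=U_k\), and of \(P=\mathrm{conv}(S_k)\), items the paper's proof only asserts or leaves to general theory. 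Your treatment of (1), (5), (6) -- slicing by \(\lambda_1\), restricting to lattice points, and the standard multigraded Hilbert-series argument for a finitely generated module over a polynomial ring -- matches the general-theory citations the paper relies on for those items.

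Two caveats. First, for (7) you only prove that \(S_k\) generates \(A_k\); ``Hilbert basis'' in the sense used by the paper (via Gordan's lemma: the set of irreducible elements, i.e.\ the minimal generating set) also requires that each element of \(S_k\) is irreducible. This is a one-liner with facts you already have: every nonzero element of \(A_k\) has first coordinate \(\lambda_1\ge 1\), so an element with \(\lambda_1=1\) cannot be a sum of two nonzero elements of \(A_k\); add this sentence and (7) is complete. Second, for the inclusion \(S_k\subseteq T_k\) you lean on the unproved ``note'' preceding the lemma; it would be cleaner, and costs nothing, to deduce it from your own generation statement: since \(U_k\) generates \(A_k\), it generates the rational cone \(C_k\), so every extremal ray passes through a point of \(U_k\subseteq H\cap(\ZZ^k\times\ZZ^k)\).
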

\begin{proof}
Let \((\lambda,\mu)\) be a plane partition in \(A_k\). If \((\lambda,\mu)\) is
nonzero, then \(\lambda_1 \ge 1\). Let \((s(\lambda),s(\mu))\) be the support of
the pair; here \(s(\lambda)(i) = 1\) if \(\lambda_i > 0\), and zero otherwise.
Then it is easy to see that \((s(\lambda),s(\mu)) \in U_k\).
Furthermore,

\[(\lambda,\mu) -(s(\lambda),s(\mu)) \in A_k.\]

Thus, every element in \(A_k\) is expressible as a sum of elements in \(U_k\).

Elements in \(T_k\) are irreducible; if the partition \((\ones^a,\zeroes^b)\)
is to written as a sum of elements in \(\NN^k \times \NN^k\), one of the
summands would have to start with a zero --- but this is impossible.

By Gordan's lemma (see for instance \autocite{gubeladze_affine_2009}) we have
that the Hilbert basis of \(A_k\) consists of the irreducible elements in the
monoid. Any element in \((\lambda,\mu) \in A_k\) with \(\lambda_1 > 1\) can be
written as

\[
(\lambda,\mu) = (s(\lambda),s(\mu)) +
\left( (\lambda,\mu) - (s(\lambda),s(\mu)) \right)
\]

and is thus reducible. Hence, the Hilbert basis
consists precisely of \(T_k\), and this set is equal to \(U_k\) and \(S_k\).
\end{proof}

For a simplicial rational cone, the generating function has numerator 1,
and denominator given by the extremal rays. Our cone \(C\) is not simplicial, though;
it has more generators than the embedding dimension \(2k\).
Thus the numerator is some mulitvariate polynomial. However, from general theory
\autocites{gubeladze_affine_2009}[][]{schrijver_theory_2011}
it follows that
\begin{corollary}
The denominator of \(\TQ_k\), and hence of \(Q_k\), is precisely
\(D_k\)
\end{corollary}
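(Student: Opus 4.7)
The plan is to invoke the classical structure theorem for Hilbert series of affine monoids cited in the excerpt. By Lemma \ref{hilbert_basis}, $S_k$ is a Hilbert basis of the affine monoid $A_k$ whose multigraded generating function is precisely $\TQ_k$. The general theory (Bruns--Gubeladze, Schrijver) states that for a positive affine monoid $A$ with Hilbert basis $\{\vek{r}_1,\dots,\vek{r}_n\}$, the multigraded Hilbert series admits a representation
\[
\sum_{\vek{a}\in A}\vek{z}^{\vek{a}} \;=\; \frac{N(\vek{z})}{\prod_{i=1}^n(1-\vek{z}^{\vek{r}_i})},
\]
with $N(\vek{z})$ a polynomial. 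The standard proof proceeds via a Stanley decomposition: one triangulates the cone into simplicial subcones spanned by subsets of the Hilbert basis, each piece contributes a rational function whose denominator is a subproduct of $\prod_i(1-\vek{z}^{\vek{r}_i})$, and summing up yields the claimed form. Specialising this to $A_k$ with Hilbert basis $S_k$ gives $\TQ_k = N_k/D_k$ for some polynomial $N_k$.

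For the statement concerning $Q_k$, I would use the identity $Q_k = \TQ_k - \TQ_{k-1}$ established earlier. The function $\TQ_{k-1}$ involves only the variables $x_1,\dots,x_{k-1},y_1,\dots,y_{k-1}$, and $S_{k-1}$ embeds into $S_k$ by padding with zeroes in the relevant coordinates; consequently $D_{k-1}$ divides $D_k$, so $Q_k$ inherits $D_k$ as a valid denominator as well.

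The main obstacle in upgrading this to the assertion that $D_k$ is \emph{precisely} the denominator in lowest terms is the non-cancellation step: one has to exclude that any factor $(1-\vek{z}^{\vek{r}})$ with $\vek{r}\in S_k$ divides $N_k$. The standard argument is to restrict to the ray $\NN\cdot\vek{r}\subset A_k$, whose contribution to $\TQ_k$ is a genuine geometric series $1/(1-\vek{z}^{\vek{r}})$; positivity of $A_k$ (i.e.\ the absence of nontrivial units, which follows from the cone being pointed) prevents this pole from being cancelled by contributions from other rays. I would defer the details of this last step to the cited monographs, where it is handled as part of the general theory of Hilbert series of pointed affine monoids.
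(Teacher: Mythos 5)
Your route is essentially the paper's: the corollary there carries no separate proof, being justified exactly as you do it, by Lemma \ref{hilbert_basis} together with the general theory of rational generating functions of pointed cones and affine monoids, with the no-cancellation step left to the cited references. However, two points in your write-up deserve correction. The principle you invoke for non-cancellation --- that positivity (pointedness) of $A_k$ forces every factor $1-\vek{z}^{\vek{r}}$ attached to a Hilbert basis element to survive in the reduced denominator --- is false in general. For the positive normal monoid generated by $(1,0),(1,1),(1,2)$ in $\ZZ^2$, whose Hilbert basis is exactly these three vectors, the fine Hilbert series is $\frac{1+xy}{(1-x)(1-xy^2)}$, so the factor $1-xy$ coming from the Hilbert basis element $(1,1)$ cancels even though the ray $\NN(1,1)$ contributes a genuine geometric series. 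What rescues the statement here is precisely the content of Lemma \ref{hilbert_basis}: $S_k$ is by construction the set of lattice points where the \emph{extremal rays} of $C_k$ meet $H$, and it happens to coincide with the Hilbert basis; it is extremality of the rays, combined with the fine multigrading, that rules out cancellation, and this is what the paper's remark ``in the multigraded case there can be no cancellation'' is resting on.

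Second, for $Q_k$ your argument via $Q_k=\TQ_k-\TQ_{k-1}$ and $D_{k-1}\mid D_k$ only shows that $Q_k\cdot D_k$ is a polynomial, whereas the statement asserts that $D_k$ is \emph{precisely} the denominator of $Q_k$ as well. Exactness for $Q_k$ does not follow formally from exactness for $\TQ_k$: the factors of $D_k/D_{k-1}$ (those of the form $1-p_kq_j$) are forced to survive, since they are coprime to $D_{k-1}$ and would otherwise be absent from $\TQ_k=Q_k+\TQ_{k-1}$, but for a factor of $D_{k-1}$ the difference $\TQ_k-\TQ_{k-1}$ could a priori lose the pole. To close this one should treat $Q_k$ by the same geometric argument as $\TQ_k$: it is the multigraded generating function of the lattice points of the polyhedron $C_k\cap\{\lambda_k\ge 1\}$, whose recession cone is $C_k$, and the same extremal-ray/no-cancellation reasoning applies. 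To be fair, the paper is equally terse on this point, asserting the absence of cancellation for $Q_k$ in the sentence following the corollary rather than proving it.
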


Specializing \(y_1=\cdots=y_k = y\) we recover Proposition 15 of
\autocite{azam_generating_2023}.
In the multigraded
case there can be no cancellation between the numerator and the denominator of
\(Q_k\), so we can assert that this \(D_k\) is \emph{the} denominator, not just
divisible by the denominator.
\subsection{Calculating \(\TQ_k\) by triangulating \(C_k\)}
\label{sec:org0882764}
\subsubsection{\(k=2\)}
\label{sec:org75f652d}
Let us consider \(C_2\) again. It lives in \(\Reals^2 \times \Reals^2\) but,
as was shown in Table \ref{tab:cone2rays}, it
is spanned by 5 extremal rays, hence it is it not simplicial. We can,
however, triangulate it into a union of simplicial cones.
Sagemath + Normaliz
gives a triangulation, shown in Table \ref{c2-tri} (rows indicate subsets of rays).

\begin{table}[htbp]
\caption{\label{c2-tri}Triangulation of \(C_{2}\)}
\centering
\begin{tabular}{rrrr}
0 & 1 & 2 & 4\\
1 & 2 & 3 & 4\\
\end{tabular}
\end{table}

So \(C=C_2= K_1 \cup K_2\), where \(K_1,K_2\) and \(K_3=K_1 \cap K_2\)
are rational simplicial cones. \(K_3\) is generated by the intersection of
the generating rays of \(K_1\) and of \(K_2\), that is to say, by \(r_1,r_1,r_4\).

A rational polyhedral simplicial cone generated by the rays \(\vek{r}\)
will have generating function
\[\frac{1}{\prod_{\vek{r}} (1-(\vek{x}\vek{y})^{\vek{r}}))}.\]

Hence, by inclusion-exclusion,

\begin{multline*}
\frac{N}{(1-(\vek{x}\vek{y})^{r_0})
(1-(\vek{x}\vek{y})^{r_1})
(1-(\vek{x}\vek{y})^{r_2})
(1-(\vek{x}\vek{y})^{r_3})
(1-(\vek{x}\vek{y})^{r_4})}
\\ =
\frac{1}{(1-(\vek{x}\vek{y})^{r_0})
(1-(\vek{x}\vek{y})^{r_1})
(1-(\vek{x}\vek{y})^{r_2})
(1-(\vek{x}\vek{y})^{r_4})}
\\ +
\frac{1}{(1-(\vek{x}\vek{y})^{r_1})
(1-(\vek{x}\vek{y})^{r_2})
(1-(\vek{x}\vek{y})^{r_3})
(1-(\vek{x}\vek{y})^{r_4})}
\\ -
\frac{1}{(1-(\vek{x}\vek{y})^{r_1})
(1-(\vek{x}\vek{y})^{r_2})
(1-(\vek{x}\vek{y})^{r_4})}
\end{multline*}

hence

\[
N = (\vek{x}\vek{y})^{r_3} + (\vek{x}\vek{y})^{r_0} - (\vek{x}\vek{y})^{r_0}(\vek{x}\vek{y})^{r_3},
\]

which evaluates to

\begin{multline*}
( -x\textsubscript{0} y\textsubscript{0} x\textsubscript{1} + 1 ) + ( -x\textsubscript{0} + 1 ) - ( x\textsubscript{0}\textsuperscript{2} y\textsubscript{0} x\textsubscript{1} - x\textsubscript{0} y\textsubscript{0} x\textsubscript{1} - x\textsubscript{0} + 1 ) = -x\textsubscript{0}\textsuperscript{2} y\textsubscript{0} x\textsubscript{1} + 1
\label{inclusion-exclusion}
\end{multline*}
\subsubsection{\(k=3\)}
\label{sec:orgf22186a}
\paragraph{Plane partitions}
\label{sec:org05c217f}
For \(k=3\) the
plane partitions  are
\begin{displaymath}
\begin{pmatrix}
\lambda_1 & \lambda_2 & \lambda_3\\
\mu_1 & \mu_2 & \mu_3
\end{pmatrix}
\end{displaymath}
with inequalites ensuring that the entries are non-negative
and non-increasing in rows and columns.
\paragraph{Extremal rays}
\label{sec:org0232ce9}
There are now 9 extremal rays, generating the cone \(C=C_3 \subset \Reals^3 \times \Reals^3\).
\begin{table}[htbp]
\caption{\label{cone-2-3-rays}Extremal rays of plane partitions with 2 rows and 3 columns}
\centering
\begin{tabular}{rl}
0 & (1, 0, 0, 0, 0, 0)\\
1 & (1, 0, 0, 1, 0, 0)\\
2 & (1, 1, 0, 0, 0, 0)\\
3 & (1, 1, 0, 1, 0, 0)\\
4 & (1, 1, 0, 1, 1, 0)\\
5 & (1, 1, 1, 0, 0, 0)\\
6 & (1, 1, 1, 1, 0, 0)\\
7 & (1, 1, 1, 1, 1, 0)\\
8 & (1, 1, 1, 1, 1, 1)\\
\end{tabular}
\end{table}
\paragraph{Triangulation}
\label{sec:orga2fbb6f}
A (regular) triangulation of the cone, with rays numbered as in Table \ref{cone-2-3-rays}, is shown in
Table \ref{cone-3-2-tri}.
\begin{table}[htbp]
\caption{\label{cone-3-2-tri}Triangulation of cone of plane partitions with 2 rows and 3 columns, rows are subcones}
\centering
\begin{tabular}{rrrrrr}
0 & 1 & 2 & 4 & 5 & 8\\
0 & 1 & 4 & 5 & 7 & 8\\
1 & 2 & 3 & 4 & 5 & 8\\
1 & 3 & 4 & 5 & 6 & 8\\
1 & 4 & 5 & 6 & 7 & 8\\
\end{tabular}
\end{table}
\subsubsection{General \(k\)}
\label{sec:org8728ec2}
It is feasible to use inclusion-exclusion to find \(\TQ_3\),
the generating function of the cone \(C_3\).
However, this is not an efficient way of calculating \(\TQ_k\) for general \(k\).
The number of extremal rays of \(C_k\) is, as we shown, equal to one less
the number of plane partitions
inside a \(2 \times k \times 1\) box. From \autocite{macmahon_combinatory_1915}, this number
is
\begin{math}
\binom{2+k}{2} -1.
\end{math}
The number of simplicial subcones in the triangulation grows swiftly; it is equal to the
Catalan number:
\begin{table}[htbp]
\caption{\label{cones-in-tri}nr  of cones in triangulation of C}
\centering
\begin{tabular}{rrrr}
k & dim(C) & nr rays & nr cones in tri\\
2 & 4 & 5 & 2\\
3 & 6 & 9 & 5\\
4 & 8 & 14 & 14\\
5 & 10 & 20 & 42\\
6 & 12 & 27 & 132\\
7 & 14 & 35 & 429\\
8 & 16 & 44 & 1430\\
9 & 18 & 54 & 4862\\
\end{tabular}
\end{table}
\section{The rational recursion of Azam and Richmond}
\label{sec:org272f879}
\subsection{Original version}
\label{sec:org78223be}
We state the main result of \autocite{azam_generating_2023}. Recall
that their \(Q_k\) is multi-graded in \(\vek{x}\) but simply-graded in \(y\), so
depends on \(k+1\) variables.
\begin{theorem}[Azam and Richmond Thm 1]
Let \(p_k = x_1 \cdots x_k\), and for a sequence of parameters \(Z=(z_1,\dots,z_{k+1})\), let
\[
Q_k(Z) = Q_k(z_1,\dots,z_k).
\]

\begin{itemize}
\item If \(Z=(x_1,\dots,x_k,y)\), then denote \(Q_k=Q_k(Z)\).
\item For \(0 < r \le k\), we put \(Z_r = (y^rp_{r+1},x_{r+2},x_{r+3},\dots,x_k,y)\).
\end{itemize}

Then \(Q_0=1\) and for \(k \ge 1\) we
have
\begin{equation}\label{eq:main-recursion}
(1-p_k)Q_k =
x_kQ_{k-1} +
\sum_{0 \le i < r \le k} \left( \frac{y^{r}p_{k}}{1-y^{r}p_{r}} \right) Q_{{k-r}}(Z_{r}) \cdot Q_{i}
\end{equation}
In particular, \(Q_k\) is a rational function in the variables
\(x_1,\dots,x_k,y\).
\label{azam-richmond-mainthm}
\end{theorem}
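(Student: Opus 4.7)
I would prove the recursion by a weight-preserving decomposition of the set of pairs $(\lambda,\mu)$ with $\lambda \in \Lambda(k)$ and $0 \le \mu \le \lambda$, whose generating function is $Q_k$. Multiplying $Q_k$ by $p_k$ shifts $\lambda \mapsto \lambda + \ones$, so $p_k Q_k$ enumerates precisely the pairs with $\lambda_k \ge 2$ and $\mu_j < \lambda_j$ for every $j$. Hence $(1-p_k) Q_k$ enumerates the \emph{extremal} pairs: those with $\lambda_k = 1$ or else $\mu_j = \lambda_j$ for at least one $j$. The three kinds of terms on the right-hand side correspond to a partition of the extremal pairs into three combinatorial classes.

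\textbf{The three classes.} First, pairs with $\lambda_k = 1$ and $\mu_k = 0$: stripping the last column identifies them with pairs $(\gamma,\delta)$ counted by $Q_{k-1}$, producing the term $x_k Q_{k-1}$. Second, pairs with $\mu_k = \lambda_k$ (matched with the $r=k$ summands): set $s = \lambda_k - 1 \ge 0$, let $i$ be the largest index with $\lambda_i > \lambda_k$ (or $i=0$ if none), and put $\alpha_j = \lambda_j - \lambda_k$, $\beta_j = \mu_j - \lambda_k$ for $j \le i$; then $(\alpha,\beta)$ is enumerated by $Q_i$ while summing over $s$ yields the prefactor $y^k p_k/(1 - y^k p_k)$. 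Third, the remaining extremal pairs with $\lambda_k \ge 2$, $\mu_k < \lambda_k$, and $\mu_j = \lambda_j$ for some $j < k$ (matched with the $r<k$ summands): take $r$ maximal with $\mu_r = \lambda_r$, define the tail $\nu_j = \lambda_{r+j}-1, \pi_j = \mu_{r+j}$ (a pair enumerated by $Q_{k-r}(Z_r)$, where the substitution $z_1 = y^r p_{r+1}$ simultaneously accounts for the $\nu_1$-piece of the $x$-exponents at positions $1,\dots,r+1$ and for the $y^{r\nu_1}$ contribution from the uniform value shared by $\mu_1,\dots,\mu_r$), define the head via $\alpha_j = \lambda_j - \lambda_r$ and $\beta_j = \mu_j - \lambda_r$ for $j \le i$ where $i$ is the largest index with $\lambda_i > \lambda_r$ (giving $Q_i$), and let $s = \lambda_r - \lambda_{r+1} \ge 0$ supply the remaining $p_r^{s+1}$ inside the prefactor $y^r p_k/(1 - y^r p_r)$.

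\textbf{Main obstacle.} The technical core is verifying that the assignment $(\lambda,\mu) \mapsto (r,i,s,\alpha,\beta,\nu,\pi)$ is a bijection onto the indexing set of the right-hand side. One must check that $\mu$ being a partition automatically forces $\mu_j \ge \lambda_r$ for every $j \le r$ (via $\mu_j \ge \mu_r = \lambda_r$), so that $\beta_j = \mu_j - \lambda_r$ is a legitimate nonnegative part, and that $\pi \le \nu$ follows from the strict inequality $\mu_j < \lambda_j$ forced for $j > r$ together with the $-1$ shift. One must also verify that the three classes are pairwise disjoint and jointly exhaustive: the $x_k Q_{k-1}$ class is singled out by $\mu_k = 0 < 1 = \lambda_k$, the $r=k$ class by $\mu_k = \lambda_k$, and the $r<k$ class by $\mu_k < \lambda_k$ combined with $\lambda_k \ge 2$, which the decomposition automatically enforces since $\nu_{k-r} \ge 1$. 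Once this bookkeeping is in place, comparison of monomials yields the identity, and the rationality of $Q_k$ follows by induction from $Q_0 = 1$ since the recursion expresses $(1-p_k) Q_k$ as a rational combination of $Q_0, \dots, Q_{k-1}$.
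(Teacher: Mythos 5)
Your decomposition is correct: $p_kQ_k$ does enumerate exactly the pairs with $\lambda_k\ge 2$ and $\mu_j<\lambda_j$ for all $j$, so $(1-p_k)Q_k$ enumerates the ``extremal'' pairs, and I checked that your three classes ($\lambda_k=1,\ \mu_k=0$; $\mu_k=\lambda_k$; and $\lambda_k\ge 2,\ \mu_k<\lambda_k,\ \mu_j=\lambda_j$ for some $j<k$ with $r$ maximal such) are disjoint, exhaust the extremal set, and that the weight bookkeeping — $x^{\alpha}y^{|\beta|}$ for the head, $(y^rp_{r+1})^{\nu_1}x_{r+2}^{\nu_2}\cdots x_k^{\nu_{k-r}}y^{|\pi|}$ for the tail, and the geometric series in $s$ producing $y^rp_k/(1-y^rp_r)$ — reproduces the right-hand side exactly, including the degenerate $r=k$ case where $Q_0(Z_k)=1$. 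Be aware, though, that this paper does not prove the theorem at all: it is quoted from Azam and Richmond, and the only indication of their proof given here (in the sketch for Corollary \ref{cor-main}) is that it proceeds through a chain of bijective lemmas about refined series $Q_{k,m}$, assembled via substitutions (their Propositions 12, 14 and Lemma 13). So your argument is a genuinely different organization: a single global weight-preserving decomposition of the set counted by $(1-p_k)Q_k$, rather than an induction through an auxiliary grading. What it buys is a short, self-contained derivation, and — since every step can track the monomial $\vek{y}^{\mu}$ rather than $y^{|\mu|}$ (the base $\lambda_r$ then contributes $q_r^{\lambda_r}$ in place of $y^{r\lambda_r}$, and the tail uses $y_{r+1},\dots,y_k$) — it proves the multigraded Corollary \ref{cor-main} verbatim, which is precisely the point this note makes by modifying Azam and Richmond's lemmas. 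Two cosmetic slips worth fixing: in the first class you strip the last column $(\lambda_k,\mu_k)=(1,0)$ of the two-row array, not a column of the Young diagram of $\lambda$; and in the third class $\mu_1,\dots,\mu_r$ do not share a uniform value — they share the common lower bound $\lambda_r$ (only $\mu_{i+1},\dots,\mu_r$ are forced to equal it), and it is this common base whose $\nu_1$-portion feeds the substitution $z_1=y^rp_{r+1}$.
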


They go on the prove
\begin{proposition}[Azam and Richmond Proposition 15]
Let \(p_k = x_1 \cdots x_k\), and
\(D_k = D_k(x_1,\dots,x_k,y) = \prod_{m=1}^k \prod_{j=0}^m (1-y^jp_m)\).
Then \(Q_k \cdot D_k\) is a polynomial.
\label{a-r-d}
\end{proposition}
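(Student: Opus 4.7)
The plan is to deduce Proposition \ref{a-r-d} from the multigraded corollary just established. That corollary asserts that in the multigraded setting $Q_k(\vek{x},\vek{y}) \cdot \prod_{\vek{r} \in S_k}(1-(\vek{x}\vek{y})^{\vek{r}})$ is a polynomial in $x_1,\dots,x_k,y_1,\dots,y_k$. Since any ring homomorphism sends polynomials to polynomials, the strategy is to specialize $y_1 = y_2 = \cdots = y_k = y$ and verify that the resulting denominator is exactly the $D_k$ of the proposition.

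I would then compute the specialization explicitly. By the description in Lemma \ref{hilbert_basis}, $S_k = U_k$ is the set of vectors $(\ones^a,\zeroes^{k-a},\ones^c,\zeroes^{k-c})$ with $1 \le a \le k$ and $0 \le c \le a$. For such a vector $\vek{r}$, the monomial $(\vek{x}\vek{y})^{\vek{r}}$ is $x_1 \cdots x_a \cdot y_1 \cdots y_c = p_a\, y_1 \cdots y_c$, which upon $y_i \mapsto y$ becomes $p_a y^c$. Relabelling $(a,c)$ as $(m,j)$, the specialized product becomes
\[
\prod_{m=1}^{k}\prod_{j=0}^{m}(1-y^j p_m) = D_k(\vek{x},y),
\]
exactly the expression appearing in the proposition. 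Under the same specialization $Q_k(\vek{x},\vek{y})$ reduces to $Q_k(\vek{x},y)$, so $Q_k(\vek{x},y) \cdot D_k(\vek{x},y)$ is a polynomial.

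The only really substantive step is the bookkeeping identifying the parameters $(a,c)$ of $U_k$ with the indices $(m,j)$ of $D_k$; I do not expect any genuine obstacle, since the non-trivial content, namely the explicit computation of the Hilbert basis, has already been carried out in Lemma \ref{hilbert_basis}. One might also note, as a sanity check, that the number of factors in $D_k$ is $\binom{k+2}{2}-1$, matching the count of extremal rays of $C_k$ recorded in Table \ref{cones-in-tri}.
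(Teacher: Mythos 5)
Your proposal is correct and matches the paper's own argument: the paper likewise obtains Proposition~\ref{a-r-d} by specializing $y_1=\cdots=y_k=y$ in the multigraded denominator $\prod_{\vek{r}\in S_k}\bigl(1-(\vek{x}\vek{y})^{\vek{r}}\bigr)$ coming from the Hilbert-basis classification in Lemma~\ref{hilbert_basis}. Your explicit bookkeeping identifying $(a,c)$ with $(m,j)$, and the count $\binom{k+2}{2}-1$ of factors, simply spells out what the paper leaves as a remark.
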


As we have seen, this latter results is a straight-forward consequence
of classification of the generating rays of \(C_k\).

The numerators \(N_k\) of \(Q_k = N_k/D_k\) are given below, for \(k=1,2\).
We show the multigraded case; the \(y\)-simplygraded case, (as studied by Azam and Richmond)
can be recovered by setting the different \(y_i\)'s to \(y\).

\(k=1\):
\begin{math}
-x\textsubscript{1}\textsuperscript{2} y\textsubscript{1} + x\textsubscript{1} y\textsubscript{1} + x\textsubscript{1}
\label{}
\end{math}
.

\(k=2\):
\begin{dmath*}
x\textsubscript{1}\textsuperscript{3} y\textsubscript{1}\textsuperscript{2} x\textsubscript{2}\textsuperscript{3} y\textsubscript{2} - x\textsubscript{1}\textsuperscript{2} y\textsubscript{1}\textsuperscript{2} x\textsubscript{2}\textsuperscript{2} y\textsubscript{2} - x\textsubscript{1}\textsuperscript{2} y\textsubscript{1} x\textsubscript{2}\textsuperscript{2} y\textsubscript{2} - x\textsubscript{1}\textsuperscript{2} y\textsubscript{1} x\textsubscript{2}\textsuperscript{2} - x\textsubscript{1}\textsuperscript{2} y\textsubscript{1} x\textsubscript{2} + x\textsubscript{1} y\textsubscript{1} x\textsubscript{2} y\textsubscript{2} + x\textsubscript{1} y\textsubscript{1} x\textsubscript{2} + x\textsubscript{1} x\textsubscript{2}
\label{}
\end{dmath*}

\(k=3\):
\begin{dmath*}
x\textsubscript{1}\textsuperscript{6} y\textsubscript{1}\textsuperscript{4} x\textsubscript{2}\textsuperscript{5} y\textsubscript{2}\textsuperscript{2} x\textsubscript{3}\textsuperscript{4} y\textsubscript{3} - x\textsubscript{1}\textsuperscript{5} y\textsubscript{1}\textsuperscript{4} x\textsubscript{2}\textsuperscript{4} y\textsubscript{2}\textsuperscript{2} x\textsubscript{3}\textsuperscript{3} y\textsubscript{3} - x\textsubscript{1}\textsuperscript{5} y\textsubscript{1}\textsuperscript{3} x\textsubscript{2}\textsuperscript{4} y\textsubscript{2}\textsuperscript{2} x\textsubscript{3}\textsuperscript{3} y\textsubscript{3} - x\textsubscript{1}\textsuperscript{4} y\textsubscript{1}\textsuperscript{3} x\textsubscript{2}\textsuperscript{4} y\textsubscript{2}\textsuperscript{2} x\textsubscript{3}\textsuperscript{4} y\textsubscript{3} - x\textsubscript{1}\textsuperscript{5} y\textsubscript{1}\textsuperscript{3} x\textsubscript{2}\textsuperscript{4} y\textsubscript{2} x\textsubscript{3}\textsuperscript{3} y\textsubscript{3} - x\textsubscript{1}\textsuperscript{5} y\textsubscript{1}\textsuperscript{3} x\textsubscript{2}\textsuperscript{4} y\textsubscript{2} x\textsubscript{3}\textsuperscript{3} - x\textsubscript{1}\textsuperscript{5} y\textsubscript{1}\textsuperscript{3} x\textsubscript{2}\textsuperscript{4} y\textsubscript{2} x\textsubscript{3}\textsuperscript{2} + x\textsubscript{1}\textsuperscript{4} y\textsubscript{1}\textsuperscript{3} x\textsubscript{2}\textsuperscript{3} y\textsubscript{2}\textsuperscript{2} x\textsubscript{3}\textsuperscript{2} y\textsubscript{3} + x\textsubscript{1}\textsuperscript{3} y\textsubscript{1}\textsuperscript{3} x\textsubscript{2}\textsuperscript{3} y\textsubscript{2}\textsuperscript{2} x\textsubscript{3}\textsuperscript{3} y\textsubscript{3} + x\textsubscript{1}\textsuperscript{4} y\textsubscript{1}\textsuperscript{3} x\textsubscript{2}\textsuperscript{3} y\textsubscript{2} x\textsubscript{3}\textsuperscript{2} y\textsubscript{3} + x\textsubscript{1}\textsuperscript{3} y\textsubscript{1}\textsuperscript{2} x\textsubscript{2}\textsuperscript{3} y\textsubscript{2}\textsuperscript{2} x\textsubscript{3}\textsuperscript{3} y\textsubscript{3} + x\textsubscript{1}\textsuperscript{4} y\textsubscript{1}\textsuperscript{3} x\textsubscript{2}\textsuperscript{3} y\textsubscript{2} x\textsubscript{3}\textsuperscript{2} + x\textsubscript{1}\textsuperscript{4} y\textsubscript{1}\textsuperscript{2} x\textsubscript{2}\textsuperscript{3} y\textsubscript{2} x\textsubscript{3}\textsuperscript{2} y\textsubscript{3} + x\textsubscript{1}\textsuperscript{3} y\textsubscript{1}\textsuperscript{2} x\textsubscript{2}\textsuperscript{3} y\textsubscript{2} x\textsubscript{3}\textsuperscript{3} y\textsubscript{3} + x\textsubscript{1}\textsuperscript{4} y\textsubscript{1}\textsuperscript{2} x\textsubscript{2}\textsuperscript{3} y\textsubscript{2} x\textsubscript{3}\textsuperscript{2} + x\textsubscript{1}\textsuperscript{3} y\textsubscript{1}\textsuperscript{2} x\textsubscript{2}\textsuperscript{3} y\textsubscript{2} x\textsubscript{3}\textsuperscript{3} + x\textsubscript{1}\textsuperscript{4} y\textsubscript{1}\textsuperscript{2} x\textsubscript{2}\textsuperscript{3} x\textsubscript{3}\textsuperscript{2} + x\textsubscript{1}\textsuperscript{3} y\textsubscript{1}\textsuperscript{2} x\textsubscript{2}\textsuperscript{3} y\textsubscript{2} x\textsubscript{3}\textsuperscript{2} - x\textsubscript{1}\textsuperscript{2} y\textsubscript{1}\textsuperscript{2} x\textsubscript{2}\textsuperscript{2} y\textsubscript{2}\textsuperscript{2} x\textsubscript{3}\textsuperscript{2} y\textsubscript{3} + x\textsubscript{1}\textsuperscript{3} y\textsubscript{1}\textsuperscript{2} x\textsubscript{2}\textsuperscript{3} y\textsubscript{2} x\textsubscript{3} - x\textsubscript{1}\textsuperscript{3} y\textsubscript{1}\textsuperscript{2} x\textsubscript{2}\textsuperscript{2} y\textsubscript{2} x\textsubscript{3} y\textsubscript{3} - x\textsubscript{1}\textsuperscript{2} y\textsubscript{1}\textsuperscript{2} x\textsubscript{2}\textsuperscript{2} y\textsubscript{2} x\textsubscript{3}\textsuperscript{2} y\textsubscript{3} - x\textsubscript{1}\textsuperscript{2} y\textsubscript{1}\textsuperscript{2} x\textsubscript{2}\textsuperscript{2} y\textsubscript{2} x\textsubscript{3}\textsuperscript{2} - x\textsubscript{1}\textsuperscript{2} y\textsubscript{1} x\textsubscript{2}\textsuperscript{2} y\textsubscript{2} x\textsubscript{3}\textsuperscript{2} y\textsubscript{3} - x\textsubscript{1}\textsuperscript{2} y\textsubscript{1}\textsuperscript{2} x\textsubscript{2}\textsuperscript{2} y\textsubscript{2} x\textsubscript{3} - x\textsubscript{1}\textsuperscript{2} y\textsubscript{1} x\textsubscript{2}\textsuperscript{2} y\textsubscript{2} x\textsubscript{3}\textsuperscript{2} - x\textsubscript{1}\textsuperscript{2} y\textsubscript{1} x\textsubscript{2}\textsuperscript{2} y\textsubscript{2} x\textsubscript{3} - x\textsubscript{1}\textsuperscript{2} y\textsubscript{1} x\textsubscript{2}\textsuperscript{2} x\textsubscript{3}\textsuperscript{2} - x\textsubscript{1}\textsuperscript{2} y\textsubscript{1} x\textsubscript{2}\textsuperscript{2} x\textsubscript{3} + x\textsubscript{1} y\textsubscript{1} x\textsubscript{2} y\textsubscript{2} x\textsubscript{3} y\textsubscript{3} - x\textsubscript{1}\textsuperscript{2} y\textsubscript{1} x\textsubscript{2} x\textsubscript{3} + x\textsubscript{1} y\textsubscript{1} x\textsubscript{2} y\textsubscript{2} x\textsubscript{3} + x\textsubscript{1} y\textsubscript{1} x\textsubscript{2} x\textsubscript{3} + x\textsubscript{1} x\textsubscript{2} x\textsubscript{3}
\label{}
\end{dmath*}
\subsection{Multigraded version}
\label{sec:org89eb75f}
We come to the main purpose of this note: the rational recursion of \(Q_k\) works
multigradedly!

\begin{corollary}
For \(i,r,k \ge 0\), define
\begin{itemize}
\item \(Q_k=Q_k(\vek{x},\vek{y})\)
\item \(p_r = x_1 \cdots x_r\)
\item \(q_r = y_1 \cdots y_r\)
\item \(\hat{Z}_{r,k} = (p_{r+1}q_{r},x_{r+2},\dots,x_k,y_{r+1},\dots,y_k)\)
\item \(R_{i,r,k} = \frac{p_kq_r}{(1-p_k)(1-p_rq_r)}Q_i Q_{k-r}(\hat{Z}_{r,k})\)
\end{itemize}

Then \(Q_0=1\) and for \(k>0\)
\begin{equation}
\label{eqn:mg}
Q_k = \frac{x_k Q_{k-1}}{1-p_k} +
\sum_{0 \le i < r \le k} R_{i,r,k}
\end{equation}
\label{cor-main}
\end{corollary}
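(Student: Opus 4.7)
The plan is to reprove Theorem~\ref{azam-richmond-mainthm} with the single variable $y$ refined into separate variables $y_1,\dots,y_k$ weighting the successive parts of $\mu$. By the definition of $Q_k(\vek{x},\vek{y})$ this refinement is the a priori correct multi-graded lift, so the work lies in checking that each step of the original combinatorial derivation is compatible with the refinement rather than in inventing a new decomposition.

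First, I would go through whichever proof of~\eqref{eq:main-recursion} one wishes to use---either the original combinatorial argument of \autocite{azam_generating_2023} or an \emph{ab initio} decomposition of pairs $(\mu \le \lambda) \in A_k$---and attach to every factor of $y$ a label telling which part $\mu_j$ it actually tracks. In each occurrence of $y^r$ the exponent $r$ is the number of leading positive parts of some sub-partition of $\mu$, so $y^r$ lifts to $q_r = y_1 \cdots y_r$. In particular, the rectangular factor $y^{r}p_{k}/(1-y^{r}p_{r})$ lifts to $p_k q_r /(1-p_r q_r)$. Similarly, in the substituted argument $Z_r$ the single $y$ in the last slot plays the role of the $y$-variable of an inner $Q_{k-r}$; under the refinement this inner $Q_{k-r}$ operates on columns $r+1,\dots,k$ of the ambient pair, so its $y$-variables become $y_{r+1},\dots,y_k$, while the extra factor $y^r$ on the first $x$-slot of $Z_r$ (arising from the accumulated $\mu$-weight on columns $1,\dots,r$) likewise becomes $q_r$. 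These observations together identify $\hat{Z}_{r,k}$ as the correct multi-graded refinement of $Z_r$.

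The main obstacle is keeping the indexing of the $y_j$'s synchronized with that of the $x_j$'s throughout the decomposition, since any off-by-one error would yield a recursion which happens to specialize correctly to~\eqref{eq:main-recursion} under $y_j=y$ but is wrong multigradedly. Once the labels are assigned correctly, the remainder of the derivation is identical to the single-graded case, and as a sanity check one verifies that specializing $y_1=\cdots=y_k=y$ collapses $q_r$ to $y^r$ and $\hat{Z}_{r,k}$ to $Z_r$, whereupon~\eqref{eqn:mg} becomes~\eqref{eq:main-recursion}.
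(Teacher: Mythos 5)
Your proposal is correct and follows essentially the same route as the paper's own proof (sketch): both lift the Azam--Richmond bijective argument by tracking which part $\mu_j$ each factor of $y$ records, so that $y^r$ becomes $q_r=y_1\cdots y_r$ and the substitution $Z_r$ becomes $\hat{Z}_{r,k}=(p_{r+1}q_r,x_{r+2},\dots,x_k,y_{r+1},\dots,y_k)$, with the specialization $y_1=\cdots=y_k=y$ recovering \eqref{eq:main-recursion} as a consistency check. The paper simply makes this explicit as a list of replacements in the relevant lemmas and propositions of \autocite{azam_generating_2023}, which is what your relabelling amounts to.
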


\begin{proof}[Proof (sketch)]
The difference to the original theorem is that \(Q_k=Q_k(\vek{x},\vek{y})\)
is a function of \(2k\) variables whereas \(\Q_k(\vek{x},y)\) is a function of
\(k+1\). Furthermore, the substitution in \(Q_{k-r}\) is
refined to
\[
Q_{k-r}(x_1\cdots x_{r+1} \cdot y_1 \cdots y_r,
x_{r+2},\dots,x_k,y_{r+1},\dots,y_k)
\]
rather than
\[
Q_{k-r}(x_1 \cdots x_{r+1} \cdot y^{r},x_{r+2},\dots,x_k,y).
\]
The variuos lemmas and propositions in Section 2 of
\autocite{azam_generating_2023}
that prove the recursion are based on bijections, and can be modified so to
work multigradedly. Specifically:

\begin{itemize}
\item Replace
\(P_\lambda(y) = \sum_{\mu \in [\emptyset, \lambda]}y^{|\mu|}\)
with
\(P_{\lambda}(\vek{y}) = \sum_{\mu \in [\emptyset,\lambda]} \vek{y}^{\mu}\)
\item Replace
\(P_{\mu,\lambda}(y) = \sum_{\nu \in [\mu,\lambda]} y^{|\nu|}\)
with
\(P_{\mu,\lambda}(\vek{y}) = \sum_{\nu \in [\mu,\lambda]} \vek{y}^{\nu}\)
\item Replace
\[Q_{k,m}(\vek{x},y) =
  \sum_{\lambda \in \Lambda(k,m)} P_{\lambda}(y) \vek{x}^\lambda
  \]
with
\[
  Q_{k,m}(\vek{x},\vek{y}) =
  \sum_{\lambda \in \Lambda(k,m)} P_{\lambda}(\vek{y}) \vek{x}^\lambda
  \]
\item Proposition 12: Replace \(y^k\) with \(y_1 \cdots y_k\).
\item Proposition 14: Also replace
\[
   Q_{k-r,m-1}(y^rp_{r+1},x_{r+2},\dots,x_k,y)
   \text{ with }
   Q_{k-r,m-1}(q_rp_{r+1},x_{r+2},\dots,x_k,y_{r+1},\dots,y_k)
  \]
\item Lemma 13, Theorem 1: Do the above replacements.
\end{itemize}
\end{proof}
\section{Relation to prior work by Andrews and Paule and MacMahon}
\label{sec:orgeaeac29}
\subsection{Geometric interpretation of the rational recursion}
\label{sec:orga99ac89}
The rational recursion above yields an efficient way of calculating
\(Q_{\ell}\), and hence \(\TQ_{\ell}\). Explicitly,
\begin{align*}\label{eq:TQrec}
  \TQ_{\ell}
  & = \sum_{k=0}^{\ell} Q_{k} \\
  & = \sum_{k=0}^{\ell} \left(
    \frac{x_{k} Q_{k-1}}{1-p_{k}} +
    \sum_{0 \le i < r \le k} R_{i,r,k} \right) \\
  & = \sum_{k=0}^{\ell} \left(
    \frac{x_{k} Q_{k-1}}{1-p_{k}} +
    \sum_{0 \le i < r \le k}
    \frac{p_kq_r}{(1-p_k)(1-p_rq_r)}Q_i Q_{k-r}(\hat{Z}_{r,k})
    \right) 
\end{align*}
This is a description how to slice up the affine monoid \(A_{\ell}\)
into disjoint pieces; \(Q_k\) enumerates lattice points in
\(C \cap H^+\), \(C\) being the polyhedral cone, and \(H+\) the open
halfspace \(\lambda_k > 0\).
The term \(Q_{k-1} \frac{x_k}{1-p_k}\) enumerates lattice points
in the translation of the projection of \(C\) in a certain direction,
et cetera.
It is not a triangulation of
\(C\) into subcones, nor is it a ``disjoint decomposition'' as
is computed by Normaliz; it is much more complicated.
\subsection{Generating functions for plane partitions in a box using the Omega operator}
\label{sec:org1f27b6b}
In a series of papers, out of all which we will refer to
\autocite{andrews_macmahons_2007}, Andrews and Paule
revisits MacMahon's method of partition analysis. They define
\[
p_{m,n}(X) = \sum_{a_{i,j} \in P_{m,n}}
x_{1,1}^{a_{1,1}} \cdots x_{m,n}^{a_{m,n}}
\]
where \(P_{m,n}\) consists of all \(m \times n\) matrices \((a_{i,j})\)
over non-negative integers \(a_{i,j}\) such that
\(a_{i,j} \ge a_{i,j+1}\) and \(a_{i,j} \ge a_{i+1,j}\). Putting \(m=2\), we
get our objects of interest.

They then (pages 650-651) illustrate MacMahon's method using his \(\Omega\)
operator by calculating \(p_{2,2}(X)\). This is of course the same as
\(\TQ_2\).

We replicate their calculations using the Omega package (written by Daniel Krenn) in Sagemath.
We could also have used the mathematica package \autocite{andrews_macmahons_2001}
by Andrews et al,  or the Maple package \autocite{zeilberger_lindiophantustxt_2001}.
by Doron Zeilberger.
\begin{Code}
\begin{Verbatim}
\color{EFD}L.<mu11,mu12,l11,l21,x11,x12,x21,x22> = LaurentPolynomialRing(ZZ)
\EFv{p22setup} = [\EFhn{1}-x11*l11*mu11, \EFhn{1}-x21*l21/mu11, \EFhn{1}-x12*mu12/l11, \EFhn{1}-x22/(l21*mu12)]
\EFv{p22} = MacMahonOmega(l21,
                    MacMahonOmega(l11,
                                  MacMahonOmega(mu12,
                                                MacMahonOmega(mu11, \EFhn{1}, p22setup))))
[(t[\EFhn{0}],t[\EFhn{1}]) \EFk{for} t \EFk{in} p22]
\end{Verbatim}
\end{Code}

\phantomsection
\label{}
\begin{verbatim}
[(-x11^2*x12*x21 + 1, 1),
 (-x11 + 1, -1),
 (-x11*x12 + 1, -1),
 (-x11*x12*x21*x22 + 1, -1),
 (-x11*x21 + 1, -1),
 (-x11*x12*x21 + 1, -1)]
\end{verbatim}

We recognize the numerator and denominator of \(\Q_2\), with renamed variables.

The most interesting part, for us,
in \autocite{andrews_macmahons_2007},
is their Lemma 2.3, which provides a recursion for plane partitions in an
\(m \times n\) box. Specialising to \(m=2\) we get

\begin{corollary}[Andrews and Paule Lemma 2.3]
\begin{multline}\label{eq:APrec}
p_{2,n+1}
\begin{pmatrix}
  x_{{1,1}} & \cdots & x_{1,n} & x_{1,n+1} \\
  x_{{2,1}} & \cdots & x_{2,n} & x_{2,n+1}
\end{pmatrix}
=
\left(
1 - x_{{1,n+1}}x_{{2,n+1}} \prod_{{1 \le i \le 2, 1 \le j \le n}} x_{i,j}
\right)^{{-1}}
\\
\times
\Omega_{\ge} \quad p_{{2,n}}
\begin{pmatrix}
  x_{{1,1}} & \cdots & x_{1,n-1} & \lambda_{0}x_{1,n} \\
  x_{{2,1}} & \cdots & x_{2,n-1} & \lambda_{1}x_{2,n}
\end{pmatrix}
\\
\times
\frac{1}{(1-\frac{x_{1,n+1}}{\lambda_{0}}) (1-\frac{x_{1,n+1}x_{{2,n+1}}}{\lambda_{0}\lambda_{1}}) }
\end{multline}
\end{corollary}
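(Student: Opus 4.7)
The plan is to carry out MacMahon's partition analysis for the two-row case, in the style of Andrews and Paule. I would start by expanding $p_{2,n+1}$ as the formal sum over all integer matrices $(a_{i,j})_{1\le i\le 2,\,1\le j\le n+1}$ with non-negative entries, weakly decreasing along each row and each column, weighted by $\prod x_{i,j}^{a_{i,j}}$. Then I would peel off the last column $(a_{1,n+1},a_{2,n+1})$ from the first $n$ columns; the latter already form a legitimate $2\times n$ plane partition and contribute the $p_{2,n}$-factor, while the former must satisfy $a_{1,n+1}\ge a_{2,n+1}\ge 0$ together with the two linking inequalities $a_{1,n}\ge a_{1,n+1}$ and $a_{2,n}\ge a_{2,n+1}$.

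Next I would encode the two linking inequalities by introducing auxiliary variables $\lambda_0,\lambda_1$: multiply $x_{1,n}$ by $\lambda_0$ and $x_{2,n}$ by $\lambda_1$ inside $p_{2,n}$, and multiply the last-column contribution by $\lambda_0^{-a_{1,n+1}}\lambda_1^{-a_{2,n+1}}$, so that $\Omega_{\ge}$ in $\lambda_0$ and $\lambda_1$ extracts exactly those terms where both column inequalities are satisfied. The in-column constraint $a_{1,n+1}\ge a_{2,n+1}\ge 0$ is absorbed by the bijection $(c,d)=(a-b,b)$, which turns the last-column sum into
\[
\sum_{c,d\ge 0}\Bigl(\frac{x_{1,n+1}}{\lambda_0}\Bigr)^{c+d}\Bigl(\frac{x_{2,n+1}}{\lambda_1}\Bigr)^{d}
=\frac{1}{\bigl(1-x_{1,n+1}/\lambda_0\bigr)\bigl(1-x_{1,n+1}x_{2,n+1}/(\lambda_0\lambda_1)\bigr)},
\]
which is precisely the factor appearing to the right of $\Omega_{\ge}$ in the claimed identity.

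The remaining step would be to perform the $\Omega$-elimination of $\lambda_0$ and $\lambda_1$. Because $p_{2,n}(x_{1,1},\ldots,\lambda_0 x_{1,n},x_{2,1},\ldots,\lambda_1 x_{2,n})$ is itself rational in the $\lambda_i$, one expands it (via the denominator description from Lemma~\ref{hilbert_basis}) and repeatedly applies MacMahon's basic rule $\Omega_{\ge}\frac{1}{(1-x\lambda)(1-y/\lambda)}=\frac{1}{(1-x)(1-xy)}$, together with its iterates when several factors share the same auxiliary variable. After all cancellations, the prefactor $\bigl(1-x_{1,n+1}x_{2,n+1}\prod_{1\le i\le 2,\,1\le j\le n}x_{i,j}\bigr)^{-1}$ emerges: it is precisely the denominator contribution along the ``all-ones'' diagonal ray, which survives because the substitutions $x_{1,n}\mapsto\lambda_0 x_{1,n}$ and $x_{2,n}\mapsto\lambda_1 x_{2,n}$ couple the all-ones ray of the $2\times n$-grid to both $\lambda_0$ and $\lambda_1$, and thus to both factors introduced by the last column.

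The main obstacle, and the step I expect to carry the weight of the argument, is this $\Omega$-elimination: since both $\lambda_0$ and $\lambda_1$ appear simultaneously in the denominator of $p_{2,n}$ after the substitution and also in the last-column factor, the elimination does not reduce to a single application of the basic rule and must be done by tracking the combinatorics of the Hilbert-basis factorization from Lemma~\ref{hilbert_basis}. The computation is entirely mechanical (and is the same one Andrews and Paule perform for general $m$), but identifying the final denominator with the stated prefactor is where care is needed.
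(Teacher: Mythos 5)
The paper itself offers no proof of this corollary---it is quoted as the $m=2$ specialization of Andrews and Paule's Lemma~2.3---so there is no in-paper argument to compare with; judged on its own, your proposal has a genuine gap, and it sits exactly at the step you yourself flag as carrying the weight. Your first two paragraphs are a correct instance of MacMahon's crude-generating-function method, but notice what they actually prove: since the factor $\frac{1}{(1-x_{1,n+1}/\lambda_0)(1-x_{1,n+1}x_{2,n+1}/(\lambda_0\lambda_1))}$ already generates \emph{every} admissible last column $a_{1,n+1}\ge a_{2,n+1}\ge 0$, and $\Omega_{\ge}$ then enforces the two linking inequalities, the $\Omega$-expression appearing in (\ref{eq:APrec}) is, by your own setup, equal to $p_{2,n+1}(X)$ on the nose. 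Consequently no subsequent $\lambda$-elimination can make the prefactor $\left(1-x_{1,n+1}x_{2,n+1}\prod_{i\le 2,\,j\le n}x_{i,j}\right)^{-1}$ ``emerge'': carrying out the elimination does not change the value of the expression, and the all-ones factor you point to is simply one of the denominator factors of $p_{2,n+1}$ itself (cf.\ Lemma~\ref{hilbert_basis}), not an extra factor that can be pulled out in front of the $\Omega$. As written, your argument establishes the recursion \emph{without} the prefactor and then asserts, rather than proves, equality with the displayed identity; the two sides differ by exactly that geometric-series factor (already for $n=1$ the coefficient of $x_{1,1}x_{2,1}x_{1,2}x_{2,2}$ disagrees), so the final identification cannot be completed as described. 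The planned machinery (Hilbert-basis factorization plus repeated use of the basic rule $\Omega_{\ge}\frac{1}{(1-x\lambda)(1-y/\lambda)}=\frac{1}{(1-x)(1-xy)}$) is also unnecessary for the identity your setup actually yields; elimination is only needed if one wants an $\Omega$-free closed form.

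If you want a derivation in which the stated prefactor arises legitimately, you need a different decomposition: first split off $a_{2,n+1}$ copies of the all-ones $2\times(n+1)$ plane partition, which is what produces $\left(1-\prod_{\text{all }i,j}x_{i,j}\right)^{-1}$, and then enumerate only last columns with $a_{2,n+1}=0$, i.e.\ keep a single factor $\frac{1}{1-x_{1,n+1}/\lambda_0}$ under the $\Omega$. In particular, before doing any mechanical elimination you should check the exact form of Andrews--Paule's Lemma~2.3 (notably how many new-column factors it carries for general $m$) and whether the specialization displayed here reproduces it verbatim; your crude-generating-function setup is the right tool, but as it stands it proves a statement that is not literally (\ref{eq:APrec}).
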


Without going into details regarding the \(\Omega\) operator, we will mention that
it operates of formal Laurent polynomials and transforms the expression under
its purvey so that the ``spurious'' \(\lambda\) variables
(not related to partitions, we are using Andrews' and Paule's notations here)
gets eliminated, and what is left is the desired generating function.

\begin{question}
Is there a relation between the ``rational recursion'' (\ref{eqn:mg})
and Andrews' and Paule's Lemma 2.3?
\end{question}
\section{References}
\label{sec:org51b95ef}
\printbibliography
\end{document}